\def\dim{\mathrm{dim}}
\def\As{\mathrm{As}}
\newtheorem{definition}{Definition}
\newtheorem{lemma}{Lemma}
\newtheorem{theorem}{Theorem}
\newtheorem{corollary}{Corollary}
\newtheorem{remark}{Remark}
\newtheorem{example}{Example}
\title{Free Algebras in Mal'cev-Type Subvarieties of Associative Algebras}
\author{B. Sartayev$^{*}$}
\address{Narxoz University, Almaty, Kazakhstan and SDU University, Kaskelen, Kazakhstan}
\email{baurjai@gmail.com}
\author{A. Ydyrys}
\address{SDU University, Kaskelen, Kazakhstan}
\email{abdibek54@gmail.com}
\keywords{Associative algebras, operads, free algebras}
\subjclass[2020]{17A30, 17A50, 16R10}
\thanks{${}^{*}$Corresponding author: Bauyrzhan Sartayev   (baurjai@gmail.com)}
\begin{document}

\maketitle

\begin{abstract}
In this paper, we study free algebras in subvarieties of the variety of associative algebras singled out by Mal'cev’s classification. For each subvariety, we construct the bases for the corresponding free algebras and describe the space of symmetric polynomials they contain. 
%Finally, examples of matrix algebras lying in these subvarieties are given.
\end{abstract}

\section{Introduction}

Associative algebras are closely related to several important classes, including Lie, Jordan, dendriform, and Novikov-type structures. The Poincaré–Birkhoff–Witt theorem embeds any Lie algebra into an associative algebra under the commutator, while many Jordan algebras arise from associative algebras via the anti-commutator; however, there exist exceptional Jordan algebras that are not realized in this way \cite{Cohn}. Beyond such embeddings, operads provide a uniform framework for passing between varieties by functorial constructions and for organizing generators, relations, and identities \cite{RB2,GK94,RB1,MarklRemm}. This operadic viewpoint has proved effective across a broad range of nonassociative settings; see, e.g., \cite{DauSar,Remm1}.

A natural source of subvarieties within the variety $\As$ of associative algebras is provided by additional polynomial identities. Mal’cev’s classification of degree-$3$ identities \cite{Mal'cev} isolates several canonical types, each generating a subvariety with its own combinatorics and representation-theoretic flavor. In fact, these identities are generated by invariant relations in associative algebras.
In \cite{KazMat}, the variety of alternative algebras was considered, following Malcev's classification. In \cite{4-type}, some properties of associative algebras with Malcev's classification are given. In this paper, we focus on two of these classes—the second and the third types—and study their free objects on the infinite countable set $X=\{x_1,\dots,x_n\}$. As proved in \cite{Lopatin}, the relatively free associative algebra is nilpotent; thus, so a separate study of free objects in the first-type variety is unnecessary. These types of associative algebras have a connection with classical algebras such as alternative, assosymmetric and $(-1,1)$ algebras \cite{assos1, Dzhuma, -1-1}. For recent results on these algebras, see \cite{perm2, Alt3, assos3}.
Indeed, this connection is explained by Koszul duality. If $\mathcal P$ is a binary quadratic operad and $\mathcal P^{!}$ its Koszul dual, then for any $\mathcal P$–algebra $U$ and any $\mathcal P^{!}$–algebra $V$ the bilinear operation
\[
[u\otimes x,\, v\otimes y]\;:=\;(uv)\otimes(xy)\;-\;(vu)\otimes(yx)
\]
(endowed with the given multiplications in $U$ and $V$) defines a Lie bracket on $U\otimes V$, see \cite{GK94}. In particular, since the dual operad of the first-type associative operad $\mathcal{A}s_1$ is the alternative operad, if $U$ is a first-type associative algebra and $V$ is alternative, then $U\otimes V$ becomes a Lie algebra under the bracket above.

We first construct the bases for the relatively free algebras in the second- and third-type Mal’cev subvarieties of the variety of associative algebras. Having a concrete basis (normal forms) is a standard tool in universal algebra and PI-theory: it is the starting point for solving the embedding problems, the Specht problem, the description of symmetric polynomials, the description problem of unary/binary subvarieties, and questions about tame versus wild automorphisms, etc. Let us list several related results in this direction:
\begin{itemize}
  \item Using a basis of the free perm algebra, it was shown in \cite{MS2022} that every metabelian Lie algebra can be embedded into an appropriate perm algebra. See also \cite{perm1,perm2} for mutations of perm algebras and the binary perm variety.
  \item Based on a basis of the free Zinbiel algebra, in \cite{KolMashSar}, it was proved that there exists a pre–Novikov algebra that does not embed into a Zinbiel algebra with a derivation. The space of Lie elements in the free Zinbiel algebra was studied in \cite{tortkara}, and mono/binary Zinbiel varieties were described in \cite{Zinb_2}.
  \item Bases for the free metabelian transposed Poisson and F-manifold algebras were constructed in \cite{AbdSart}. Automorphisms of finitely generated free metabelian Novikov algebras were studied in \cite{AutNov}; it is natural to pose the same problem for the metabelian transposed Poisson and F-manifold cases.
  \item It was proved in \cite{bicom1} that any metabelian Lie algebra can be embedded into an appropriate bicommutative algebra.
  \item Using a basis of the free Novikov algebra, in \cite{DotIsmUmi2023}, the Specht property for Novikov algebras was established. For the variety of noncommutative Novikov algebras case, in \cite{erlagol2021}, it was shown that every algebra in this variety embeds into an associative algebra with a derivation.
  \item For free metabelian Poisson algebras, symmetric polynomials were described via a basis approach in \cite{Findik1}.
  \item Using a Hall basis of the free Lie algebra, the Nowicki conjecture for the two-generated free Lie algebra was considered in \cite{Nowicki}.
\end{itemize}

The paper is organized as follows: 

Section~2 introduces the second-type identity and records the basic rewriting principles that will be used throughout. We construct a basis of the free second type-associative algebra and provide a combinatorial explanation of this basis. In Section 3, we describe the space of symmetric polynomials in the second-type associative operad. Section~4 addresses the third type in the commutator and anti-commutator bracket language. We construct the basis of the free third-type associative algebra. As an application, we describe the list of all independent identities under the commutator and anti-commutator.

In this paper, all algebras are defined over a field of characteristic $0$.

\section{Second-type associative algebra}

An associative algebra is called of the second type if it satisfies the following identity:
\begin{gather}\label{as2}
abc-acb-bac+bca+cab-cba=0.    
\end{gather}

By $\mathcal{A}s_2$, we denote the variety of second-type associative algebras. The corresponding free algebra of this variety is denoted by $\mathcal{A}s_2\<X\>$.

\begin{lemma}
In algebra $\mathcal{A}s_2\<X\>$ the following identities hold:
\begin{gather}\label{iddeg4}
dcab = dcba + cdab - cdba + adcb - acdb.
\end{gather}
and
\begin{gather}\label{id1}
abcde=abdce=acbde
\end{gather}
\end{lemma}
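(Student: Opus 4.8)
The plan is to derive both identities as $\mathbb{Q}$-linear consequences of (\ref{as2}), substituting words of degree $\le 2$ into it and working in the multilinear components of degrees $4$ and $5$. Write $s_3(u,v,w)$ for the left-hand side of (\ref{as2}) and $[u,v]=uv-vu$. Collecting the right, resp.\ left, multiplications, (\ref{as2}) is equivalent to each of the ``cyclic'' identities $[a,b]c+[b,c]a+[c,a]b=0$ and $a[b,c]+b[c,a]+c[a,b]=0$; and grouping the six monomials of (\ref{iddeg4}) pairwise ($dcab-dcba=dc[a,b]$, etc.) shows that (\ref{iddeg4}) is nothing but the rewriting rule
\[
[c,d]\,[a,b]\;=\;a\,[c,d]\,b ,
\]
a product of two commutators equals the first commutator inserted between the two variables of the second. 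So the substance of the first half of the lemma is to deduce this rule from (\ref{as2}), and I expect this to be the main obstacle: substituting a commutator into a single three-letter block of a degree-$4$ word and then only manipulating brackets merely reproduces the relation one started with, so the derivation must combine instances attached to two overlapping blocks — for example $s_3(cd,a,b)$ and $s_3(dc,a,b)$ together with $s_3(a,cd,b)$ and $s_3(a,dc,b)$. Concretely the rule is a single linear dependence among finitely many instances of (\ref{as2}) inside $P_4\cong\mathbb{Q}[S_4]$, and checking it is the routine but slightly delicate step — the one place where the alternating nature of (\ref{as2}) is genuinely used.

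Once (\ref{iddeg4}) is available I record its consequences. Relabelling $a\leftrightarrow b$ in the rule above gives the antisymmetry $u\,p\,v=-\,v\,p\,u$ for every commutator $p$; substituting $p=[c,d]$ into $s_3(a,p,b)=0$ then gives $a[c,d]b-b[c,d]a=[a,b][c,d]+[c,d][a,b]$, whose left side equals $2\,a[c,d]b$ by the antisymmetry, so $[a,b][c,d]=a[c,d]b=c[a,b]d$ and in particular $[[a,b],[c,d]]=0$. Hence for any commutator $p$ one has $u\,p\,v=p\,[u,v]=[u,v]\,p$, and any two commutators commute.

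For (\ref{id1}) it suffices to prove $a[b,c]de=0$, i.e.\ $abcde=acbde$. Apply (\ref{iddeg4}) to the block $bcde$ multiplied on the left by $a$; rearranging the result two ways gives $a[b,c]de=a[b,c]ed+ad[b,c]e$ and $a[b,c][d,e]=ad[b,c]e$. Using $u\,p\,v=p\,[u,v]$ I rewrite each of $a[b,c]de$, $a[b,c]ed$, $ad[b,c]e$ as $[b,c]$ times a commutator of words in $a,d,e$; comparing the two expressions for $a[b,c]de$ collapses everything to $[b,c]\big([e,d]a+[e,a]d\big)=0$. One instance of the cyclic form of (\ref{as2}) rewrites $[e,d]a+[e,a]d=2[e,d]a-[a,d]e$, so $2[b,c][e,d]a=[b,c][a,d]e$; since $[b,c][x,y]=x[b,c]y$ this reads $2\,e[b,c]da=a[b,c]de$, and relabelling $a\leftrightarrow e$ gives $2\,a[b,c]de=e[b,c]da$. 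The two equalities force $a[b,c]de=4\,a[b,c]de$, hence $a[b,c]de=0$ since the characteristic is $0$. The remaining equality $abcde=abdce$ is then immediate: the relation $a[b,c][d,e]=ad[b,c]e$ above has left side $a[b,c]de-a[b,c]ed=0$, so $ad[b,c]e=0$, which after relabelling is exactly $abcde=abdce$. In particular (\ref{id1}) shows that the variables in positions $2,3,4$ of any monomial of degree $\ge 5$ in $\mathcal{A}s_2\langle X\rangle$ may be permuted arbitrarily — the combinatorial fact on which the basis construction of the next section will rest.
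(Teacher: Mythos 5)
Your treatment of the second identity is correct and is a genuinely different route from the paper's: the paper proves the entire lemma by delegating to the computer algebra system Albert, whereas you reformulate (\ref{iddeg4}) as the rule $[c,d][a,b]=a[c,d]b$, deduce from it that commutators commute and that $u\,p\,v=p\,[u,v]=[u,v]\,p$ for any commutator $p$, and then derive $a[b,c]de=0$ and $ab[c,d]e=0$ by forcing $a[b,c]de=4\,a[b,c]de$ in characteristic $0$. I verified the individual steps (the identity $[a,de]-[a,ed]-[ad,e]=[e,d]a+[e,a]d$, the use of the cyclic form of (\ref{as2}), and the final relabellings), and they are sound; this part is more informative than the paper's proof and would be worth keeping.

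The gap is in the first half: you never actually establish (\ref{iddeg4}). You correctly identify its derivation from (\ref{as2}) as ``the main obstacle'' and then defer it as a ``routine but slightly delicate'' check without exhibiting the linear combination. Worse, the instances you offer as an example cannot suffice: since $s_3$ is alternating, $s_3(cd,a,b)=-s_3(a,cd,b)$ and $s_3(dc,a,b)=-s_3(a,dc,b)$, so the four elements you name span at most the two--dimensional space generated by $s_3(a,cd,b)$ and $s_3(a,dc,b)$. Their difference yields only the \emph{symmetric} relation $a[c,d]b-b[c,d]a=[a,b][c,d]+[c,d][a,b]$ (every commutator--substitution instance of (\ref{as2}) collapses to this same relation), and comparing the coefficients of the monomials $abcd$ and $abdc$ shows that the asymmetric target $[c,d][a,b]-a[c,d]b$ does not lie in that span. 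A genuine derivation must also bring in the one--sided consequences $x\,s_3(y,z,w)$ and $s_3(y,z,w)\,x$ and/or the symmetrized substitutions $s_3(a,cd,b)+s_3(a,dc,b)$; until that dependence is written out explicitly (or verified by machine in the $24$--dimensional multilinear component of degree $4$, as the paper does), the first identity --- and with it everything in your second half, which is built on it --- remains unproved.
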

\begin{proof}
The result can be proven using computer algebra software programs, such as Albert \cite{Albert}.
\end{proof}

From (\ref{id1}) and (\ref{iddeg4}), we obtain
\begin{gather}\label{id2}
abcde=abced-bcaed+bcade.
\end{gather}
The identity (\ref{id1}) implies that in an associative monomial
\[
x_{i_1}x_{i_2}x_{i_3}\cdots x_{i_{n-1}}x_{i_n},
\]
the generators $x_{i_2},\;x_{i_3},\;\ldots,\;x_{i_{n-1}}$ can be ordered. So, the spanning set of the algebra $\mathcal{A}s_2\<X\>$ is the associative monomials with ordered generators given above. Every monomial from the spanning set is represented as follows:
\[
x_{i_1}x_{i_2}x_{i_3}\cdots x_{i_{n-1}}x_{i_n}\rightsquigarrow (x_{i_1},x_{i_n}).
\]
The identity (\ref{id2}) can be represented as
\begin{gather}\label{id2'}
(a,e)=(a,d)-(b,d)+(b,e).
\end{gather}
Since in (\ref{id2}), the generator \(c\) occurs in neither the first nor the last position, the rule (\ref{id2'}) applies to associative monomials of arbitrary length. Indeed, the rule (\ref{id2'}) can be illustrated as follows:

\begin{center}
\begin{tikzpicture}[scale=1.05,
  decoration={
    markings,
    mark=between positions 0.07 and 0.93 step 7mm
      with {\arrow{Stealth[length=2.3mm,width=1.7mm]}}
  },
  every path/.style={line cap=round, line join=round, draw=black!85, line width=1.05pt},
  every node/.style={font=\small}
]
  \coordinate (X) at (-3.2,-0.3);
  \coordinate (Z) at (-1.1, 1.45);
  \coordinate (T) at ( 1.4, 1.25);
  \coordinate (Y) at ( 2.2,-0.10);
  \node[left]        at (X) {$a$};
  \node[above left]  at (Z) {$d$};
  \node[above right] at (T) {$b$};
  \node[right]       at (Y) {$e$};
  \fill (X) circle (2.8pt);  % a
  \fill (Z) circle (2.8pt);  % d
  \fill (T) circle (2.8pt);  % b
  \fill (Y) circle (2.8pt);  % e
  \draw[postaction=decorate]
    (X) .. controls +(0.70,0.70) and +(-0.45,-0.25) .. (Z);
  \draw[postaction=decorate]
    (T) .. controls +(-0.95,0.10) and +(0.95,0.10) .. (Z);
  \draw[postaction=decorate]
    (T) .. controls +(0.25,-0.50) and +(0.40,0.10) .. (Y);
  \draw[postaction=decorate]
    (X) .. controls +(1.55,-0.28) and +(-1.20,-0.28) .. (Y);
\end{tikzpicture}
\end{center}
i.e., to reach the vertex $e$ from $a$, the rule proceeds through two intermediate arbitrary vertices (here $d$ and $b$). Since $(b,d)$ has the minus sign, the arrows are illustrated from $b$ to $d$. 

Let us define the sets $\mathcal{A}_1$, $\mathcal{A}_2$, $\mathcal{A}_3$ and $\mathcal{A}_4$ as follows:
$$\mathcal{A}_1=\{x_{i_1}\},\;\;\mathcal{A}_2=\{x_{j_1}x_{j_2},\;x_{j_2}x_{j_1}\},$$
$$\mathcal{A}_3=\{x_{k_2}x_{k_1}x_{k_3},\; x_{k_1}x_{k_3}x_{k_2},\; x_{k_3}x_{k_1}x_{k_2},\; x_{k_2}x_{k_3}x_{k_1},\; x_{k_3}x_{k_2}x_{k_1}\},$$
and
\begin{multline*}
\mathcal{A}_4=\{x_{l_1}x_{l_2}x_{l_3}x_{l_4}, x_{l_1}x_{l_2}x_{l_4}x_{l_3}, x_{l_1}x_{l_3}x_{l_4}x_{l_2}, x_{l_1}x_{l_4}x_{l_2}x_{l_3}, x_{l_1}x_{l_4}x_{l_3}x_{l_2},\\
x_{l_2}x_{l_3}x_{l_4}x_{l_1}, x_{l_3}x_{l_4}x_{l_1}x_{l_2}, x_{l_3}x_{l_4}x_{l_2}x_{l_1}, x_{l_4}x_{l_3}x_{l_2}x_{l_1}\},
\end{multline*}
where $k_1\leq k_2\leq k_3$ and $l_1\leq l_2\leq l_3\leq l_4$.
For $n\geq 5$, we define $\mathcal{A}_n$ as follows:
\[
\mathcal{A}_n=\{(x_{i_1},x_{r}),(x_{r},x_{i_1}),(x_{i_2},x_{i_3})\},
\]
where $i_1\leq r$, $i_1<i_2<i_3$ and $r=i_1,\;i_2,\;\ldots,\;i_n$. Also, we set
\[
\mathcal{A}=\bigcup_{i}\mathcal{A}_i.
\]
In the multilinear case,
\[
\mathcal{A}_5=\{(x_1,x_2),(x_1,x_3),(x_1,x_4),(x_1,x_5),(x_2,x_1),(x_3,x_1),(x_4,x_1),(x_5,x_1),(x_2,x_3)\}.
\]
and
\begin{multline*}
\mathcal{A}_6=\{(x_1,x_2),(x_1,x_3),(x_1,x_4),(x_1,x_5),(x_1,x_6),\\
(x_2,x_1),(x_3,x_1),(x_4,x_1),(x_5,x_1),(x_6,x_1),(x_2,x_3)\}.    
\end{multline*}

\begin{theorem}\label{basisAs2}
The set $\mathcal{A}$ is a basis of algebra $\mathcal{A}s_2\<X\>$.
\end{theorem}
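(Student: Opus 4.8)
The plan is to show two things: that $\mathcal{A}$ spans $\mathcal{A}s_2\<X\>$, and that it is linearly independent. For the spanning part, the discussion preceding the theorem already reduces the problem: by identity \eqref{id1} every associative monomial can be rewritten so that its middle generators $x_{i_2},\dots,x_{i_{n-1}}$ are weakly ordered, and such a monomial is encoded by the pair $(x_{i_1},x_{i_n})$ of its first and last letters (together with the multiset of all letters, which is fixed within a multilinear/graded component). So it suffices to show that, fixing the underlying multiset of generators, the admissible pairs $(x_{i_1},x_{i_n})$ listed in the definition of $\mathcal{A}_n$ span all pairs. This is exactly what the rewriting rule \eqref{id2'}, $(a,e)=(a,d)-(b,d)+(b,e)$, lets me do: I would set up an induction on $n$ (the degrees $n\le 4$ being the explicit base cases $\mathcal{A}_1,\dots,\mathcal{A}_4$), and for $n\ge5$ use \eqref{id2'} repeatedly to reduce an arbitrary pair $(x_p,x_q)$ to a combination of pairs of the three allowed shapes $(x_{i_1},x_r)$, $(x_r,x_{i_1})$, $(x_{i_2},x_{i_3})$ — i.e.\ pairs with $x_{i_1}$ (the smallest generator) in one slot, plus the single exceptional pair $(x_{i_2},x_{i_3})$. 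Concretely, given $(x_p,x_q)$ with both $p,q$ different from $i_1$, I apply \eqref{id2'} with the intermediate vertices chosen as $x_{i_1}$ and then $x_{i_1}$ again to express it through pairs already containing $x_{i_1}$; the bookkeeping of which of these are themselves already "allowed" versus which need one more rewrite is the routine part.

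For linear independence I would construct a concrete $\mathcal{A}s_2$-algebra — equivalently, a module over the operad — in which the images of the elements of $\mathcal{A}$ are visibly independent. The natural candidate is to realize $\mathcal{A}s_2\<X\>$ inside the free associative algebra $k\<X\>$ modulo the $T$-ideal generated by \eqref{as2}, and to exhibit for each degree $n$ a spanning set of size exactly $|\mathcal{A}_n|$ that surjects onto the relatively free algebra; combined with the spanning statement this forces independence. In the multilinear degree-$n$ component the count to verify is $|\mathcal{A}_n| = 2n-1$ for $n\ge5$ (and $1,2,5,9$ for $n=1,2,3,4$), so I would compute the dimension of the multilinear part of $\mathcal{A}s_2\<X\>$ directly: the relation \eqref{as2} and its consequences \eqref{iddeg4}, \eqref{id1}, \eqref{id2} generate a subspace of $k S_n$ (acting on $x_1\cdots x_n$) whose codimension I must show equals $2n-1$. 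This is a finite linear-algebra computation for small $n$ and an inductive rank computation in general; alternatively one can use the operadic/character-theoretic description from Section~3 if it is available.

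The main obstacle is the independence direction, specifically proving that the rewriting system coming from \eqref{id1} and \eqref{id2'} is \emph{confluent} — that no further relations are hidden among the normal forms in $\mathcal{A}$. Spanning only gives an upper bound $\dim \le |\mathcal{A}|$; to get equality I genuinely need a lower bound, and the cleanest way is to produce enough linear functionals (or an explicit faithful representation) separating the $\mathcal{A}$-monomials. I expect the cleanest argument is: take the quotient map $k\<X\> \twoheadrightarrow \mathcal{A}s_2\<X\>$, observe that the relations \eqref{as2} only ever permute the first/last-letter data in the controlled way recorded by \eqref{id2'}, hence descend to the abelianization-type invariant "(multiset of letters, first letter, last letter) modulo the $\eqref{id2'}$-relation," and check that this invariant already distinguishes all $2n-1$ classes in $\mathcal{A}_n$. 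Verifying that this invariant is well-defined on the quotient — i.e.\ that every consequence of \eqref{as2}, not just the displayed ones, respects it — is the delicate point; handling degrees $3$ and $4$ by hand (the small cases where the pattern of allowed monomials is irregular) and then bootstrapping is where the real work lies.
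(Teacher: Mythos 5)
Your spanning argument is essentially the paper's: order the middle letters via \eqref{id1}, encode a monomial by its first/last pair, and reduce arbitrary pairs to the allowed ones by repeated use of \eqref{id2'}. (One small imprecision: the rule $(a,e)=(a,d)-(b,d)+(b,e)$ needs \emph{two distinct} middle letters as intermediates, so the reduction of $(x_p,x_q)$ with $p,q\neq i_1$ goes through $x_{i_1}$ \emph{and} $x_{i_2}$, producing the five-term expression $(x_{i_t},x_{i_1})-(x_{i_2},x_{i_1})+(x_{i_2},x_{i_3})-(x_{i_1},x_{i_3})+(x_{i_1},x_{i_p})$; you cannot use $x_{i_1}$ "twice." This is the bookkeeping you deferred, and it is exactly the explicit computation carried out in the paper.)

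The genuine gap is in the linear independence direction, and you have in fact named it yourself: spanning only gives $\dim\le|\mathcal{A}_n|$, and you need a matching lower bound. Your three suggested routes all stop short of a proof. Computing the codimension of the multilinear consequences of \eqref{as2} inside $kS_n$ "inductively" is not easier than the original problem; and your preferred "invariant" $(\text{multiset},\text{first letter},\text{last letter})$ modulo the \eqref{id2'}-relation is only a vector-space--level observation --- to conclude anything you must show this data supports an \emph{algebra} structure satisfying associativity and \eqref{as2}, which is precisely the step you flag as "the delicate point" and do not carry out. The paper closes this gap constructively: it defines an algebra $A\<X\>$ whose underlying vector space has $\mathcal{A}$ as a basis, writes down an explicit multiplication table for products of basis pairs (e.g.\ $(x_{i_1},x_{i_b})(x_{i_c},x_{i_d})=(x_{i_1},x_{i_d})$, with the longer five-term formulas in the remaining cases), and then verifies case by case that this multiplication is associative and satisfies \eqref{as2}. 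Since $A\<X\>$ then lies in the variety and is generated by $X$, it is a homomorphic image of $\mathcal{A}s_2\<X\>$ in which $\mathcal{A}$ is independent, forcing $A\<X\>\cong\mathcal{A}s_2\<X\>$. Without an explicit model of this kind (or an equivalent confluence/diamond-lemma argument, which you also do not supply), your proposal does not establish independence.
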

\begin{proof}
Firstly, we show that the set $\mathcal{A}$ spans $\mathcal{A}s_2\<X\>$. For degrees up to $4$, the result can be verified by software programs, such as Albert \cite{Albert}. For monomials of degree $5$, we use the notion
\[
x_{i_1}x_{i_2}x_{i_3}\cdots x_{i_{n-1}}x_{i_n}\rightsquigarrow (x_{i_1},x_{i_n})
\]
as described above. In the multilinear case, we have $20$ different pairs, and we explicitly show the way of rewriting $11$ pairs as a sum $9$ from $\mathcal{A}_5$. By (\ref{id2'}),
\[
(x_{i_5},x_{i_3})=(x_{i_5},x_{i_1})-(x_{i_2},x_{i_1})+(x_{i_2},x_{i_3}),
\]
\[
(x_{i_4},x_{i_3})=(x_{i_4},x_{i_1})-(x_{i_2},x_{i_1})+(x_{i_2},x_{i_3}).
\]
\[
(x_{i_2},x_{i_5})=(x_{i_2},x_{i_3})-(x_{i_1},x_{i_3})+(x_{i_1},x_{i_5}),
\]
\[
(x_{i_2},x_{i_4})=(x_{i_2},x_{i_3})-(x_{i_1},x_{i_3})+(x_{i_1},x_{i_4}),
\]
\[
(x_{i_3},x_{i_5})=(x_{i_3},x_{i_1})-(x_{i_2},x_{i_1})+(x_{i_2},x_{i_5})=(x_{i_3},x_{i_1})-(x_{i_2},x_{i_1})+(x_{i_2},x_{i_3})-(x_{i_1},x_{i_3})+(x_{i_1},x_{i_5}),
\]
\[
(x_{i_3},x_{i_4})=(x_{i_3},x_{i_1})-(x_{i_2},x_{i_1})+(x_{i_2},x_{i_4})=(x_{i_3},x_{i_1})-(x_{i_2},x_{i_1})+(x_{i_2},x_{i_3})-(x_{i_1},x_{i_3})+(x_{i_1},x_{i_4}),
\]
\[
(x_{i_3},x_{i_2})=(x_{i_3},x_{i_1})-(x_{i_4},x_{i_1})+(x_{i_4},x_{i_2})=(x_{i_3},x_{i_1})-(x_{i_2},x_{i_1})+(x_{i_2},x_{i_3})-(x_{i_1},x_{i_3})+(x_{i_1},x_{i_2}),
\]
\[
(x_{i_4},x_{i_5})=(x_{i_4},x_{i_1})-(x_{i_2},x_{i_1})+(x_{i_2},x_{i_5})=(x_{i_4},x_{i_1})-(x_{i_2},x_{i_1})+(x_{i_2},x_{i_3})-(x_{i_1},x_{i_3})+(x_{i_1},x_{i_5}),
\]
\[
(x_{i_4},x_{i_2})=(x_{i_4},x_{i_5})-(x_{i_1},x_{i_5})+(x_{i_1},x_{i_2})=(x_{i_4},x_{i_1})-(x_{i_2},x_{i_1})+(x_{i_2},x_{i_3})-(x_{i_1},x_{i_3})+(x_{i_1},x_{i_2}),
\]
\[
(x_{i_5},x_{i_4})=(x_{i_5},x_{i_1})-(x_{i_2},x_{i_1})+(x_{i_2},x_{i_4})=(x_{i_5},x_{i_1})-(x_{i_2},x_{i_1})+(x_{i_2},x_{i_3})-(x_{i_1},x_{i_3})+(x_{i_1},x_{i_4})
\]
and
\[
(x_{i_5},x_{i_2})=(x_{i_5},x_{i_1})-(x_{i_4},x_{i_1})+(x_{i_4},x_{i_2})=(x_{i_5},x_{i_1})-(x_{i_2},x_{i_1})+(x_{i_2},x_{i_3})-(x_{i_1},x_{i_3})+(x_{i_1},x_{i_2}).
\]
For non-multilinear cases, the result works analogously. Indeed, these rewriting rules work for monomials of arbitrary degree. In the consequent degrees, with a new generator $x_{i_p}$, we add two additional monomials $(x_{i_1},x_{i_p})$ and $(x_{i_p},x_{i_1})$ to the basis.

So, it remains to show that any monomial of the form $(x_{i_t},x_{i_p})$ can be written as a sum of $\mathcal{A}$, where $t,\;p\neq 1$. If $t=x_{i_{2}}$ or $p=x_{i_{3}}$, then
\[
(x_{i_2},x_{i_p})=(x_{i_2},x_{i_3})-(x_{i_1},x_{i_3})+(x_{i_1},x_{i_p})
\]
and
\[
(x_{i_t},x_{i_3})=(x_{i_t},x_{i_1})-(x_{i_2},x_{i_1})+(x_{i_2},x_{i_3}).
\]
Otherwise,
\[
(x_{i_t},x_{i_p})=(x_{i_t},x_{i_1})-(x_{i_2},x_{i_1})+(x_{i_2},x_{i_3})-(x_{i_1},x_{i_3})+(x_{i_1},x_{i_p}).
\]

To prove the linear independence of the set $\mathcal{A}$, we construct the multiplication table for the algebra $A\<X\>$ with the basis $\mathcal{A}$ and show that such an algebra $A\<X\>$ belongs to the variety of second-type associative algebras. Up to degree 4, we define multiplication in $A\<X\>$ that is consistent with the associative identity and identity (\ref{as2}). Starting from degree $5$, the multiplication is defined as follows:
\[
\begin{cases}
(x_{i_1},x_{i_b})(x_{i_c},x_{i_d})=(x_{i_1},x_{i_d})\\
(x_{i_a},x_{i_1})(x_{i_1},x_{i_d})=(x_{i_a},x_{i_1})-(x_{i_2},x_{i_1})+(x_{i_2},x_{i_3})-(x_{i_1},x_{i_3})+(x_{i_1},x_{i_d}),\;a,d\neq 1\\
(x_{i_a},x_{i_b})(x_{i_c},x_{i_1})=(x_{i_a},x_{i_1})\\
(x_{i_a},x_{i_1})(x_{i_2},x_{i_3})=(x_{i_a},x_{i_1})-(x_{i_2},x_{i_1})+(x_{i_2},x_{i_3})-(x_{i_1},x_{i_3})+(x_{i_1},x_{i_3}),\;a\neq 1,2\\
(x_{i_2},x_{i_3})(x_{i_2},x_{i_3})=(x_{i_2},x_{i_3})
\end{cases}
\]
The multiplication of the basis monomial with a generator is defined analogously.

It remains to verify that $A\<X\>$ satisfies the defining identities of the algebra $\mathcal{A}s_2\<X\>$. We set 
\[
a=(x_{i_p},x_{i_t}),\;b=(x_{i_s},x_{i_r})\; \textrm{and}\; a=(x_{i_m},x_{i_n}).
\]
To check the associative identity, we consider $4$ cases:

Case $p=1$: 
\begin{multline*}
((x_{i_1},x_{i_t})(x_{i_s},x_{i_r}))(x_{i_m},x_{i_n})-(x_{i_1},x_{i_t})((x_{i_s},x_{i_r})(x_{i_m},x_{i_n}))=\\(x_{i_1},x_{i_r})(x_{i_m},x_{i_n})-(x_{i_1},x_{i_t})((x_{i_s},x_{i_1})-(x_{i_2},x_{i_1})+(x_{i_2},x_{i_3})-(x_{i_1},x_{i_3})+(x_{i_1},x_{i_n}))=\\
(x_{i_1},x_{i_n})-(x_{i_1},x_{i_1})+(x_{i_1},x_{i_1})-(x_{i_1},x_{i_3})+(x_{i_1},x_{i_3})-(x_{i_1},x_{i_n})=0.
\end{multline*}

Case $n=1$: 
\begin{multline*}
((x_{i_p},x_{i_t})(x_{i_s},x_{i_r}))(x_{i_m},x_{i_1})-(x_{i_p},x_{i_t})((x_{i_s},x_{i_r})(x_{i_m},x_{i_1}))=\\
((x_{i_p},x_{i_1})-(x_{i_2},x_{i_1})+(x_{i_2},x_{i_3})-(x_{i_1},x_{i_3})+(x_{i_1},x_{i_r}))(x_{i_m},x_{i_1})-(x_{i_p},x_{i_t})(x_{i_s},x_{i_1})=\\
(x_{i_p},x_{i_1})-(x_{i_2},x_{i_1})+(x_{i_2},x_{i_1})-(x_{i_1},x_{i_1})+(x_{i_1},x_{i_1})-(x_{i_p},x_{i_1})=0.
\end{multline*}

Case $p,n,r\neq 1$ and $s=1$:
\begin{multline*}
((x_{i_p},x_{i_1})(x_{i_1},x_{i_r}))(x_{i_1},x_{i_n})=(x_{i_p},x_{i_1})(x_{i_1},x_{i_n})-(x_{i_2},x_{i_1})(x_{i_1},x_{i_n})\\
+(x_{i_2},x_{i_3})(x_{i_1},x_{i_n})-(x_{i_1},x_{i_3})(x_{i_1},x_{i_n})+(x_{i_1},x_{i_r})(x_{i_1},x_{i_n})=\\
(x_{i_p},x_{i_1})-(x_{i_2},x_{i_1})+(x_{i_2},x_{i_3})-(x_{i_1},x_{i_3})+(x_{i_1},x_{i_n})\\
-((x_{i_2},x_{i_1})-(x_{i_2},x_{i_1})+(x_{i_2},x_{i_3})-(x_{i_1},x_{i_3})+(x_{i_1},x_{i_n}))\\
+(x_{i_2},x_{i_1})-(x_{i_2},x_{i_1})+(x_{i_2},x_{i_3})-(x_{i_1},x_{i_3})+(x_{i_1},x_{i_n})-(x_{i_1},x_{i_n})+(x_{i_1},x_{i_n})=\\
(x_{i_p},x_{i_1})-(x_{i_2},x_{i_1})+(x_{i_2},x_{i_3})-(x_{i_1},x_{i_3})+(x_{i_1},x_{i_n})
\end{multline*}
and
\begin{multline*}
(x_{i_p},x_{i_1})((x_{i_1},x_{i_r})(x_{i_1},x_{i_n}))=(x_{i_p},x_{i_1})(x_{i_1},x_{i_n})=\\
(x_{i_p},x_{i_1})-(x_{i_2},x_{i_1})+(x_{i_2},x_{i_3})-(x_{i_1},x_{i_3})+(x_{i_1},x_{i_n}).
\end{multline*}
On both sides, the result is the same.

Case $p,n,s\neq 1$ and $r=1$:
\begin{multline*}
((x_{i_p},x_{i_t})(x_{i_s},x_{i_1}))(x_{i_m},x_{i_n})=(x_{i_p},x_{i_1})(x_{i_m},x_{i_n})=\\
(x_{i_p},x_{i_1})-(x_{i_2},x_{i_1})+(x_{i_2},x_{i_3})-(x_{i_1},x_{i_3})+(x_{i_1},x_{i_n})
\end{multline*}
and
\begin{multline*}
(x_{i_p},x_{i_t})((x_{i_s},x_{i_1})(x_{i_m},x_{i_n}))=(x_{i_p},x_{i_t})(x_{i_s},x_{i_1})-(x_{i_p},x_{i_t})(x_{i_2},x_{i_1})\\
+(x_{i_p},x_{i_t})(x_{i_2},x_{i_3})-(x_{i_p},x_{i_t})(x_{i_1},x_{i_3})+(x_{i_p},x_{i_t})(x_{i_1},x_{i_n})=\\
(x_{i_p},x_{i_1})-(x_{i_p},x_{i_1})+(x_{i_p},x_{i_t})(x_{i_1},x_{i_n})=(x_{i_p},x_{i_1})-(x_{i_2},x_{i_1})+(x_{i_2},x_{i_3})-(x_{i_1},x_{i_3})+(x_{i_1},x_{i_n}).
\end{multline*}
On both sides, the result is the same. 

Case $p,s,m=2$ and $t,r,n=3$: 
\begin{multline*}
((x_{i_2},x_{i_3})(x_{i_2},x_{i_3}))(x_{i_2},x_{i_3})-(x_{i_2},x_{i_3})((x_{i_2},x_{i_3})(x_{i_2},x_{i_3}))=\\
(x_{i_2},x_{i_3})(x_{i_2},x_{i_3})-(x_{i_2},x_{i_3})(x_{i_2},x_{i_3})=(x_{i_2},x_{i_3})-(x_{i_2},x_{i_3})=0.
\end{multline*}
The identity (\ref{as2}) can be proved analogously. 
It remains to note that $A\<X\>\cong \mathcal{A}s_2\<X\>$.
\end{proof}

From Theorem \ref{basisAs2}, we have
\[
\dim(\mathcal{A}s_2(1))=1,\dim(\mathcal{A}s_2(2))=2,\dim(\mathcal{A}s_2(3))=5,\dim(\mathcal{A}s_2(4))=9
\]
and starting from degree $5$,
\[
\dim(\mathcal{A}s_2(n))=2n-1.
\]

\textbf{Combinatorial explanation of Theorem \ref{basisAs2}.}

Beginning with degree~$5$, regard the generator set $X$ as the vertex set of a directed graph, and interpret each monomial $(x_i,x_j)$ as a directed edge $x_i\to x_j$. In degree~$5$, the digraph $(X,\mathcal{A}_5)$ is strongly connected by (\ref{id2'}): from any $x_i$, there is a path to any $x_j$.

For degree~$6$, we adjoin a new vertex $x_6$. Since $x_6$ is initially isolated, to preserve strong connectivity, it is necessary and sufficient to add the two edges $(x_1,x_6)$ and $(x_6,x_1)$. Indeed, $x_1$ already reaches and is reachable from every other vertex via edges in $\mathcal{A}_5$, so for all $i,j$ we have paths
\[
x_i \rightsquigarrow x_1 \rightsquigarrow x_6
\quad\text{and}\quad
x_6 \rightsquigarrow x_1 \rightsquigarrow x_j.
\]
The same inductive step applies when adjoining each subsequent new vertex.

\section{Symmetric polynomials of operad $\mathcal{A}s_2$}

Let $X=\{x_1,x_2,\ldots,x_n\}$ be a finite set and let $p(x_1,\ldots,x_n)$ be a polynomial in the algebra $\mathcal{A}s_2\langle X\rangle$. We call $p$ symmetric if it is invariant under the action of the symmetric group $S_n$ on variables, i.e.,
\[
(\sigma\cdot p)(x_1,\ldots,x_n):=p(x_{\sigma(1)},\ldots,x_{\sigma(n)})=p(x_1,\ldots,x_n)\quad\text{for all }\sigma\in S_n.
\]
Define a family of polynomials $\mathcal{P}$ in $\mathcal{A}s_2\langle x_1,x_2,\ldots,x_n\rangle$ by
\[
p_1=\sum_i x_i,\qquad
p_2=\sum_{i\neq j} x_i x_j,
\]
\[
p_{3}=\sum_{i_1<i_2<i_3}\!\bigl(x_{i_1}x_{i_3}x_{i_2}+x_{i_3}x_{i_1}x_{i_2}+x_{i_2}x_{i_1}x_{i_3}\bigr),
\]
\begin{multline*}
p_4=\sum_{i_1<i_2<i_3<i_4}\bigl(
4\,x_{i_1}x_{i_2}x_{i_3}x_{i_4}
-10\,x_{i_1}x_{i_2}x_{i_4}x_{i_3}
+15\,x_{i_1}x_{i_4}x_{i_2}x_{i_3}
-5\,x_{i_1}x_{i_4}x_{i_3}x_{i_2}\\
\quad+3\,x_{i_2}x_{i_3}x_{i_4}x_{i_1}
+11\,x_{i_3}x_{i_4}x_{i_1}x_{i_2}
-8\,x_{i_3}x_{i_4}x_{i_2}x_{i_1}
+5\,x_{i_4}x_{i_3}x_{i_2}x_{i_1}\bigr),
\end{multline*}
and, for $n\ge 5$,
\[
p_n=\sum_{t\neq 2}(x_{i_t},x_{i_1})
+\sum_{t\neq 3}(x_{i_1},x_{i_t})
-(n-3)(x_{i_1},x_{i_3})
-(n-3)(x_{i_2},x_{i_1})
+(n-2)(x_{i_2},x_{i_3}).
\]

By the second-type associative operad we mean the multilinear component of $\mathcal{A}s_2\langle X\rangle$, i.e., the span of elements in which each $x_i$ occurs exactly once.

\begin{example}
In $\mathcal{A}s_2$, we obtain
\[
p_{3}=x_{1}x_{3}x_{2}+x_{3}x_{1}x_{2}+x_{2}x_{1}x_{3},
\]
\begin{multline*}
p_4=4x_{1}x_{2}x_{3}x_{4}-10x_{1}x_{2}x_{4}x_{3}+15x_{1}x_{4}x_{2}x_{3}-5x_{1}x_{4}x_{3}x_{2}
+3x_{2}x_{3}x_{4}x_{1}\\
+11x_{3}x_{4}x_{1}x_{2}-8x_{3}x_{4}x_{2}x_{1}+5x_{4}x_{3}x_{2}x_{1},
\end{multline*}
\[
p_5=(x_{3},x_{1})+(x_{4},x_{1})+(x_{5},x_{1})+(x_{1},x_{2})+(x_{1},x_{4})+(x_{1},x_{5})
-2(x_{1},x_{3})-2(x_{2},x_{1})+3(x_{2},x_{3}),
\]
and
\begin{multline*}
p_6=(x_{3},x_{1})+(x_{4},x_{1})+(x_{5},x_{1})+(x_{6},x_{1})+(x_{1},x_{2})+(x_{1},x_{4})+(x_{1},x_{5})+(x_{1},x_{6})\\
-3(x_{1},x_{3})-3(x_{2},x_{1})+4(x_{2},x_{3}).
\end{multline*}
\end{example}

\begin{theorem}
In the second-type associative operad, the symmetric polynomials are precisely the elements of the family $\mathcal{P}$.
\end{theorem}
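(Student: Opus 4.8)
The plan is to reduce the statement, one degree at a time, to a one--dimensional rank computation, and then to identify $p_n$ with the essentially unique symmetric element.

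First I would observe that for each $n$ the multilinear component $\mathcal{A}s_2(n)$ is spanned by the images of the associative words $x_{\sigma(1)}x_{\sigma(2)}\cdots x_{\sigma(n)}$, $\sigma\in S_n$, and that the substitution action of $S_n$ carries the image of $x_{\sigma(1)}\cdots x_{\sigma(n)}$ to the image of $x_{\tau\sigma(1)}\cdots x_{\tau\sigma(n)}$. Hence $\sigma\mapsto x_{\sigma(1)}\cdots x_{\sigma(n)}$ induces a surjection of $S_n$--modules from the left regular representation onto $\mathcal{A}s_2(n)$. The $S_n$--invariants of the regular representation are spanned by $\sum_{\sigma\in S_n}\sigma$, and in characteristic $0$ the averaging operator $\frac{1}{n!}\sum_{\tau}\tau$ shows that every invariant of a quotient is the image of an invariant; therefore the space of symmetric polynomials of degree $n$ is exactly the line spanned by the full symmetrization
\[
W_n:=\sum_{\sigma\in S_n} x_{\sigma(1)}x_{\sigma(2)}\cdots x_{\sigma(n)}\ \in\ \mathcal{A}s_2(n),
\]
which is one--dimensional when $W_n\neq 0$ and zero otherwise. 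So it suffices to prove that, for every $n$, the element $p_n$ is a nonzero scalar multiple of $W_n$.

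For $n\le 4$ I would check this by a finite computation from the basis of Theorem \ref{basisAs2} and the relations (\ref{as2}), (\ref{iddeg4}) (verifiable with Albert, as elsewhere in the paper): reduce $W_n$ using the relations, write it in the basis $\mathcal{A}_n$, and compare with the displayed $p_n$. For $n\ge 5$ the decisive point is that, by (\ref{id1}), each word becomes a pair--monomial in $\mathcal{A}s_2(n)$, namely $x_{i_1}\cdots x_{i_n}=(x_{i_1},x_{i_n})$ (the $n-2$ interior letters being reorderable), so that
\[
W_n=(n-2)!\sum_{a\neq b}(x_a,x_b)\quad\text{in }\mathcal{A}s_2(n);
\]
moreover $S_n$ permutes the pair--monomials $\{(x_a,x_b):a\neq b\}$ transitively, hence $\sum_{a\neq b}(x_a,x_b)$ is manifestly symmetric. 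I would then expand $\sum_{a\neq b}(x_a,x_b)$ in the multilinear basis $\mathcal{A}_n=\{(x_2,x_3)\}\cup\{(x_1,x_t):2\le t\le n\}\cup\{(x_t,x_1):2\le t\le n\}$ by applying, pair by pair, the rewriting rules recorded in the proof of Theorem \ref{basisAs2}: the $n(n-1)$ ordered pairs split into the cases $a=1$, $b=1$, $a=2$, $b=3$, and the generic one, each contributing a prescribed combination of basis monomials, and summing the contributions yields $\sum_{a\neq b}(x_a,x_b)=(n-1)\,p_n$. In particular the coefficient of $(x_2,x_3)$ on the left works out to $(n-1)(n-2)\neq 0$, so $W_n\neq 0$ and $p_n\neq 0$ by Theorem \ref{basisAs2}. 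Together with the previous paragraph this gives $(\mathcal{A}s_2(n))^{S_n}=\mathrm{span}\{p_n\}$ for every $n$, which is the assertion.

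The only genuine effort is the bookkeeping in the last step --- grouping the $n(n-1)$ pairs by the applicable rewriting rule and tracking how the boundary indices $1,2,3$ feed into the coefficients of the three types of basis monomial. Everything else is either formal (a quotient of the regular representation carries at most a one--dimensional space of invariants) or a bounded check in low degree.
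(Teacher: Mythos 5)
Your proposal is correct, and its computational core coincides with the paper's proof: both reduce the full symmetrization $\sum_{\sigma}x_{\sigma(1)}\cdots x_{\sigma(n)}$ via identity (\ref{id1}) to $(n-2)!\sum_{a\neq b}(x_a,x_b)$ and then rewrite the pair--monomials in the basis of Theorem~\ref{basisAs2} to recognize a nonzero multiple of $p_n$. Where you genuinely add something is the first half: the paper only verifies that $p_n$ \emph{is} symmetric (it is proportional to the symmetrization) and never argues that nothing else is, whereas your observation that $\sigma\mapsto x_{\sigma(1)}\cdots x_{\sigma(n)}$ makes $\mathcal{A}s_2(n)$ a quotient of the left regular $S_n$--module --- so that, by averaging in characteristic $0$, the invariant subspace is exactly the image of $\sum_\sigma\sigma$ and hence at most one--dimensional --- is precisely what is needed to justify the word ``precisely'' in the statement. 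Your nonvanishing check via the coefficient $(n-1)(n-2)$ of $(x_2,x_3)$ then pins the dimension down to exactly one. One bookkeeping remark in your favour: your normalization $\sum_{a\neq b}(x_a,x_b)=(n-1)p_n$ is the consistent one (both sides have coefficient sum $n(n-1)$), while the paper's displayed identities $6\sum_{i\neq j}(x_i,x_j)=4p_5$ and $(n-2)!\sum_{i\neq j}(x_i,x_j)=(n-1)p_n$ carry a spurious factor of $(n-2)!$ on the left (the rewriting rule (\ref{id2'}) preserves the sum of coefficients, so the correct global statement is $W_n=(n-1)!\,p_n$). The only parts of your plan left unexecuted are the finite checks in degrees at most $4$ and the explicit tally of the rewriting contributions, both of which are routine.
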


\begin{proof}
The statement for degrees $\le 4$ is immediate. For $n\ge 5$ we must show that $p_n$ is $S_n$–invariant. Consider first the full symmetrization in the associative operad,
\[
\sum_{\sigma\in S_n} \sigma(x_1x_2\cdots x_n).
\]
By~(\ref{id1}),
\[
\sum_{\sigma\in S_n} \sigma(x_1x_2\cdots x_n)=(n-2)!\sum_{i\neq j}(x_i,x_j).
\]
For $n=5$, applying the rewriting rules from Theorem~\ref{basisAs2} yields
\[
6\sum_{i\neq j}(x_i,x_j)=4\,p_5,
\]
which proves the claim for $n=5$. In the general case, for arbitrary $n$, we use the general rewriting rules from Theorem \ref{basisAs2}, i.e,
\[
(x_{2},x_{p})=(x_{2},x_{3})-(x_{1},x_{3})+(x_{1},x_{p}),
\]
\[
(x_{t},x_{3})=(x_{t},x_{1})-(x_{2},x_{1})+(x_{2},x_{3}),
\]
and
\[
(x_{t},x_{p})=(x_{t},x_{1})-(x_{2},x_{1})+(x_{2},x_{3})-(x_{1},x_{3})+(x_{1},x_{p}).
\]
Substituting these into $(n-2)!\sum_{i\neq j}(x_i,x_j)$ and collecting terms gives
\[
(n-2)!\sum_{i\neq j}(x_i,x_j)=(n-1)\,p_n,
\]
hence, $p_n$ is symmetric. This completes the proof.
\end{proof}

\section{Third-type associative algebra}

\begin{definition}
An associative algebra is called of the third type if it satisfies the following identity:
\begin{gather}\label{as3}
abc+bac-bca-cba=0.    
\end{gather}
\end{definition}

By $\mathcal{A}s_3$, we denote the variety of third-type associative algebras. The corresponding free algebra of this variety is denoted by $\mathcal{A}s_3\<X\>$.

Let us rewrite every monomial of the algebra $\mathcal{A}s_3\<X\>$ in terms of the commutators and anti-commutators, i.e.,
\[
ab=1/2([a,b]+\{a,b\}).
\]
For such monomials, we have the following lemmas:
\begin{lemma}\label{id1as3}
\cite{4-type} An algebra $\mathcal{A}s_2^{(+)}\<X\>$ satisfies the following identities:
\[
\{\{\{a,b\},c\},d\}=\{\{\{a,c\},b\},d\}=\{\{\{a,b\},d\},c\}=\{\{a,b\},\{c,d\}\}.
\]
\end{lemma}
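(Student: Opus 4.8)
The plan is to reduce the statement to a finite verification inside the degree-$4$ multilinear component of $\mathcal{A}s_2\langle X\rangle$. Each of the four expressions
\[
\{\{\{a,b\},c\},d\},\qquad \{\{\{a,c\},b\},d\},\qquad \{\{\{a,b\},d\},c\},\qquad \{\{a,b\},\{c,d\}\}
\]
is multilinear of degree $4$ in $a,b,c,d$; since the relations defining $\mathcal{A}s_2\langle X\rangle$ inside the free associative algebra form a $T$-ideal, it is enough to take $a,b,c,d$ to be four distinct free generators and to prove the three equalities there, i.e.\ inside the $9$-dimensional space $\mathcal{A}s_2(4)$. Using $\{u,v\}=uv+vu$, I would first expand each of the four expressions as a sum of eight associative monomials, for instance
\[
\{\{\{a,b\},c\},d\}=abcd+bacd+cabd+cbad+dabc+dbac+dcab+dcba,
\]
\[
\{\{a,b\},\{c,d\}\}=abcd+abdc+bacd+badc+cdab+cdba+dcab+dcba,
\]
and similarly for $\{\{\{a,c\},b\},d\}$ and $\{\{\{a,b\},d\},c\}$.

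Next I would bring each of these four degree-$4$ associative polynomials to its normal form with respect to the basis $\mathcal{A}_4$ provided by Theorem~\ref{basisAs2}. The admissible moves are associativity together with the degree-$4$ relation~(\ref{iddeg4}) and all of its images under relabelling of $a,b,c,d$; equivalently, one checks that each of the three pairwise differences lies in the $T$-ideal generated, in the free associative algebra, by the defining identity~(\ref{as2}), by exhibiting it as an explicit linear combination of multilinear degree-$4$ substitutions of $r(x_1,x_2,x_3)=x_1x_2x_3-x_1x_3x_2-x_2x_1x_3+x_2x_3x_1+x_3x_1x_2-x_3x_2x_1$ into the variables $a,b,c,d$. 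Since $\mathcal{A}s_2(4)$ is only $9$-dimensional, this is a finite computation, and one finds that the four polynomials collapse to one and the same element, which gives the asserted chain of equalities. In practice this comparison, like the degree-$\le 4$ part of Theorem~\ref{basisAs2}, is most conveniently done with the software Albert~\cite{Albert}.

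The main obstacle is exactly this rewriting step. Unlike degrees $\ge 5$, where the rule~(\ref{id2'}) reduces every monomial transparently to the normal form $(x_{i_1},x_{i_n})$, in degree~$4$ the single relation~(\ref{iddeg4}) is not by itself a confluent rewriting rule: applying a relabelled copy of~(\ref{iddeg4}) to a non-basis word may reintroduce other non-basis words, so to be sure the normal form is well defined one has to keep track of the full $15$-dimensional module of degree-$4$ relations --- which is precisely the data the Albert verification underlying Theorem~\ref{basisAs2} records. Once that bookkeeping is in place, matching the four normal forms is routine. We remark that the identities in the statement are already established in \cite{4-type}, so one may alternatively invoke that reference directly.
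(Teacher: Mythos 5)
The paper offers no argument for this lemma at all: it is imported verbatim from \cite{4-type}. Your direct finite verification is therefore a genuinely different, more self-contained route, and in spirit it matches how the paper handles its other low-degree claims (multilinearize, expand into the $24$ associative monomials of degree $4$, reduce modulo the degree-$4$ component of the relevant $T$-ideal, let Albert do the linear algebra). Your expansions of the nested anti-commutators are correct, and your caution that (\ref{iddeg4}) alone is not a confluent rewriting rule in degree $4$ --- so that one must work modulo the full $15$-dimensional relation space rather than by naive rewriting --- is well taken.

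The one substantive concern is which variety the computation is run in. You take the statement literally and reduce modulo the $T$-ideal of (\ref{as2}) inside the $9$-dimensional space $\mathcal{A}s_2(4)$. But the lemma sits in Section 4, carries the label \texttt{id1as3}, and is invoked in the proofs of Lemma~\ref{id2as3} and Theorem~\ref{basisAs3}, all of which concern $\mathcal{A}s_3\<X\>$; the occurrence of ``$\mathcal{A}s_2^{(+)}$'' in the statement is almost certainly a slip for $\mathcal{A}s_3^{(+)}$. This matters because $\mathcal{A}s_3$ is \emph{not} a subvariety of $\mathcal{A}s_2$: identity (\ref{as2}) is the standard polynomial $s_3$, which spans the sign representation of $S_3$ under place permutation, whereas the $S_3$-module generated by (\ref{as3}) is a single copy of the two-dimensional representation (both the symmetrizer and the antisymmetrizer annihilate it, consistent with $\dim\mathcal{A}s_3(3)=4$), so neither degree-$3$ identity implies the other. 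Consequently a verification carried out in $\mathcal{A}s_2(4)$ does not transfer to $\mathcal{A}s_3(4)$, which is where the lemma is actually used. The fix is mechanical --- run the identical finite check modulo the degree-$4$ consequences of (\ref{as3}) instead of (\ref{as2}), or simply cite \cite{4-type} as the paper itself does --- but as written your computation targets the wrong $T$-ideal for the role the lemma plays here.
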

\begin{lemma}\label{id2as3}
In algebra $\mathcal{A}s_3\<X\>$ the following identities hold:
\[
[[a,b],[c,d]]=[[a,b],\{c,d\}]=[\{a,b\},\{c,d\}]=
\{\{a,b\},[c,d]\}=\{[a,b],[c,d]\}=0
\]
and
\begin{multline*}
[[[a,b],c],d]=\{[[a,b],c],d\}=[\{[a,b],c\},d]=[[\{a,b\},c],d]=\\
[\{\{a,b\},c\},d]=\{[\{a,b\},c],d\}=\{\{[a,b],c\},d\}=0.
\end{multline*}
\end{lemma}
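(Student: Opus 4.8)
The plan is to derive every identity in Lemma~\ref{id2as3} by systematically rewriting the bracket monomials back into associative monomials, expanding, and then applying the defining identity~(\ref{as3}) together with the degree-$3$ and degree-$4$ consequences one extracts from it. Concretely, for a word like $[[a,b],[c,d]]$ I would first expand the outer and inner brackets using $[u,v]=uv-vu$, producing a signed sum of eight permutations of $abcd$ (a degree-$4$ associative polynomial), and likewise for the mixed terms involving $\{\,\}$. Since all the target identities are multilinear of degree $4$, it suffices to work inside the multilinear component of $\As_3\langle x_1,x_2,x_3,x_4\rangle$, whose dimension and a basis are already pinned down by the analysis of $\As_3$ paralleling Theorem~\ref{basisAs2}; reducing each expanded polynomial to that normal form and observing it vanishes gives the result.

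The cleaner route, which I would actually carry out, is to first establish the degree-$3$ consequences of~(\ref{as3}) and then bootstrap. From $abc+bac=bca+cba$ one reads off that the "left-normed commutator mod the symmetric part" collapses: for instance, combining~(\ref{as3}) with its image under $a\leftrightarrow b$ and under cyclic shifts yields relations expressing $[[a,b],c]$ purely in terms of associators of the form $\{a,b\}c$-type words, and in particular shows $[[a,b],c]+[[b,a],c]=0$ trivially while the nontrivial content is that $[[a,b],c]$ equals (up to sign and the anticommutator pieces) something totally controlled. The key structural fact to isolate is that in $\As_3\langle X\rangle$ every product of three or more generators can be rewritten so that at most one commutator-bracket survives and it must sit innermost; this is exactly the content one needs, and it follows by the same kind of rewriting $abc \rightsquigarrow$ reduced form used in Section~2. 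Once one knows $[[a,b],c]$ is expressible with the bracket innermost and that nesting a further commutator or anticommutator around it forces cancellation via~(\ref{as3}), the seven identities in the displayed multi-line equation all drop out, and the five identities in the first display (products of two brackets of degree $2$ each) follow from the degree-$3$ collapse applied twice.

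I would organize the write-up as: (i) expand each of the twelve asserted-zero expressions into associative form; (ii) record the two or three degree-$3$ identities that are immediate linear combinations of~(\ref{as3}) and its $S_3$-translates (this is the one genuinely ad hoc bookkeeping step, but it is finite and small); (iii) substitute and collect, citing the basis of the multilinear component of $\As_3$ (dimension count analogous to $\dim(\As_2(4))=9$) so that "the reduced expression is the zero vector" is a rigorous statement rather than a hand computation; (iv) for the degree-$4$ one-bracket identities, note that each is obtained from a degree-$3$ identity by multiplying on the right by $x_4$ and by linearity, together with the already-proven fact that further bracketing of an already-collapsed expression is zero. Lemma~\ref{id1as3}, which is quoted from~\cite{4-type}, can be invoked directly for the all-anticommutator relations, so no separate argument is needed there.

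The main obstacle I anticipate is purely organizational rather than conceptual: keeping the signs straight across the eightfold expansions and making sure the set of degree-$3$ auxiliary identities in step~(ii) is complete enough to reduce every one of the twelve expressions without circularity. A secondary subtlety is that the rewriting $abc\rightsquigarrow$ normal form in $\As_3$ is not literally the same as the $(x_{i_1},x_{i_n})$ rule of Section~2 — the third-type identity is weaker in some directions and stronger in others — so I would need to either state and prove the analogue of identity~(\ref{id1}) for $\As_3$ first, or (more safely) just fix the multilinear basis of $\As_3$ in degrees $3$ and $4$ explicitly by a short direct computation and phrase all reductions against that. Given characteristic $0$ and multilinearity, even a brute-force verification via $S_4$-module decomposition is available as a fallback, so the proof is guaranteed to close; the only real work is presenting it compactly.
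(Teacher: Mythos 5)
Your overall strategy --- expand each bracketed expression into associative words and verify, by linear algebra in the $24$-dimensional multilinear degree-$4$ component, that it lies in the $S_4$-consequence space of~(\ref{as3}) --- is valid in characteristic $0$ and would close the proof; it is, however, a genuinely different route from the paper's. The paper never returns to associative words: it imports from \cite{4-type} the two seed identities $[[a,b],[c,d]]=[[[a,b],c],d]=0$ together with the four polarization identities (\ref{pol1})--(\ref{pol4}) of~(\ref{as3}) written in the commutator/anti-commutator language, and then derives all twelve vanishings by substituting degree-$2$ and degree-$3$ brackets into these polarizations and collapsing the resulting all-anticommutator words via Lemma~\ref{id1as3}; the crux is the final step, where $[a,\{b,\{c,d\}\}]$ is computed in two ways to force $\{\{[a,b],c\},d\}=0$. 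Your approach trades this chain of substitutions for one finite computation; what it loses is the stock of intermediate identities that the paper reuses later.

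Two points need repair before your write-up is a proof. First, step~(iv) is wrong as stated: identities such as $\{\{[a,b],c\},d\}=0$ and $[\{\{a,b\},c\},d]=0$ are \emph{not} obtained from degree-$3$ identities by right-multiplying by $d$, because $\{[a,b],c\}$, $\{\{a,b\},c\}$ and $[[a,b],c]$ are all nonzero in $\mathcal{A}s_3\<X\>$ (the first two are basis elements of $\mathcal{B}_3$). The vanishing after one further bracketing is new degree-$4$ information and is precisely the nontrivial content of the lemma, so the phrase ``further bracketing of an already-collapsed expression is zero'' assumes what is to be proved. Second, ``reducing to the normal form of the multilinear component of $\As_3$'' quietly presupposes Theorem~\ref{basisAs3}, which the paper proves \emph{after} and \emph{using} this lemma; to avoid circularity, phrase the computation as membership of each expanded polynomial in the subspace spanned by the $S_4$-translates and one-variable substitutions of~(\ref{as3}), not as reduction modulo a basis of the quotient. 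With these two corrections the brute-force version of your argument is complete.
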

\begin{proof}

The proof of 
\[
[[a,b],[c,d]]=[[[a,b],c],d]=0
\]
was given in \cite{4-type}. Also, in \cite{4-type}, it was proved that the polarization of $\mathcal{A}s_3$ which is
\begin{gather}\label{pol1}
[\{a,c\},b]=\{[a,b],c\}-\{a,[b,c]\},    
\end{gather}
\begin{gather}\label{pol2}
[b,[a,c]]=\{\{a,b\},c\}-\{a,\{b,c\}\},
\end{gather}
\begin{gather}\label{pol3}
\{a,\{b,c\}\}-3/2\{[a,c],b\}-3/2\{[a,b],c\}-1/2\{\{a,c\},b\}-1/2\{\{a,b\},c\}=0
\end{gather}
and
\begin{gather}\label{pol4}
\{a,[b,c]\}=-1/2\{[a,c],b\}+1/2\{[a,b],c\}+ 1/2\{\{a,c\},b\}-1/2\{\{a,b\},c\}.
\end{gather}
Firstly, we prove the identities that contain only one anti-commutator bracket.
By (\ref{pol1}), (\ref{pol2}), (\ref{pol3}), (\ref{pol4}) and Lemma \ref{id1as3}, we obtain
\[
\{b,[a,[c,d]]\}=^{(\ref{pol2})}\{b,\{\{c,a\},d\}\}-\{b,\{c,\{a,d\}\}\}=0\Rightarrow \{b,[a,[c,d]]\}=0,
\]
\[
[b,[a,\{c,d\}]]=^{(\ref{pol2})}\{\{a,b\},\{c,d\}\}-\{a,\{b,\{c,d\}\}\}=0\Rightarrow [b,[a,\{c,d\}]]=0,
\]
\[
[\{b,d\},[a,c]]=^{(\ref{pol2})} \{\{a,\{b,d\}\},c\}-\{a,\{\{b,d\},c\}\}=0\Rightarrow [\{b,d\},[a,c]]=0,
\]
\begin{multline*}
\{\{a,d\},\{b,c\}\}=^{(\ref{pol3})} 3/2\{[\{a,d\},c],b\}+3/2\{[\{a,d\},b],c\}+\{\{\{a,d\},b\},c\}\Rightarrow\\
\{[\{a,d\},c],b\}=-\{[\{a,d\},b],c\},    
\end{multline*}
\[
0=[[b,d],[a,c]]=^{(\ref{pol2})} \{\{[b,d],a\},c\}-\{\{[b,d],c\},a\}\Rightarrow\\
\{\{[b,d],a\},c\}=\{\{[b,d],c\},a\},  
\]
\begin{multline*}
\{[a,d],[b,c]\}=^{(\ref{pol4})}
-1/2\{[[a,d],c],b\}+1/2\{[[a,d],b],c\}+ 1/2\{\{[a,d],c\},b\}-1/2\{\{[a,d],b\},c\}=\\
1/2\{\{[a,d],c\},b\}-1/2\{\{[a,d],b\},c\}=0 \Rightarrow
\{[a,d],[b,c]\}=0
\end{multline*}
and
\[
[\{a,[c,d]\},b]=^{(\ref{pol1})} \{[a,b],[c,d]\}-\{a,[b,[c,d]]\}=0 \Rightarrow
[b,\{a,[c,d]\}]=0.
\]
Now, we prove the identities that contain exactly two anti-commutator brackets.

\[
0=[b,[a,[c,d]]]=^{(\ref{pol2})} \{\{a,b\},[c,d]\}-\{a,\{b,[c,d]\}\}\Rightarrow \{\{a,b\},[c,d]\}=\{a,\{b,[c,d]\}\}
\]
\[
0=[[a,b],[c,d]]=^{(\ref{pol2})}
\{\{[a,b],c\},d\}-\{\{[a,b],d\},c\}\Rightarrow \{\{[a,b],c\},d\}=\{\{[a,b],d\},c\}
\]
Applying (\ref{pol3}), to
\[
\{a,\{b,\{c,d\}\}\},\; \{\{a,d\},\{b,c\}\}\;\textrm{and}\;\{d,\{a,\{b,c\}\}\},
\]
we obtain
\[
\{[a,\{c,d\}],b\}=-\{[a,b],\{c,d\}\}
\]
\[
\{[\{a,d\},c],b\}=-\{[\{a,d\},b],c\}
\]
and
\[
\{d,\{[a,c],b\}\}=-\{d,\{[a,b],c\}\}.
\]
Also, we have
\[
[\{\{a,b\},c\},d]=^{(\ref{pol1})}\{[\{a,b\},d],c\}-\{\{a,b\},[d,c]\}\Rightarrow [\{\{a,b\},c\},d]=2\{[\{a,b\},d],c\},
\]
\[
[\{\{a,b\},c\},d]=[\{\{a,b\},c\},d]=2\{[\{a,b\},d],c\}=2\{[\{a,c\},d],b\},
\]
\[
[\{a,c\},\{b,d\}]=^{(\ref{pol1})} \{[a,\{b,d\}],c\}-\{a,[\{b,d\},c]\}\Rightarrow 2[\{a,c\},\{b,d\}]=\{[a,\{b,d\}],c\}.
\]
Using all obtained identities, from the one side, we have
\begin{multline*}
[a,\{b,\{c,d\}\}]=^{(\ref{pol1})}
\{[a,\{c,d\}],b\}+\{[a,b],\{c,d\}\}=^{(\ref{pol1}),(\ref{pol3})} \{\{[a,d],c\},b\}+\{\{[a,c],d\},b\}\\
+3/2\{[[a,b],d],c\}+3/2\{[[a,b],c],d\}+1/2\{\{[a,b],d\},c\}+1/2\{\{[a,b],c\},d\}=\{\{[a,b],c\},d\}.
\end{multline*}
From the other side,
\[
[a,\{b,\{c,d\}\}]=2\{b,[a,\{c,d\}]\}=2\{\{[a,b],c\},d\}.
\]
Equalizing both sides, we obtain
\[
\{\{[a,b],c\},d\}=0,
\]
and before obtained equalities complete the proof.
\end{proof}

Let us define the sets $\mathcal{B}_1$, $\mathcal{B}_2$, $\mathcal{B}_3$, and for $n\geq 4$, $\mathcal{B}_n$ as follows:
$$\mathcal{B}_1=\{x_{i_1}\},\;\;\mathcal{B}_2=\Bigl\{[x_{j_1},x_{j_2}],\;\{x_{j_1},x_{j_2}\}\Bigl\},$$
$$\mathcal{B}_3=\Bigl\{\{[x_{k_1},x_{k_2}],x_{k_3}\},\;\{[x_{k_1},x_{k_3}],x_{k_2}\},\;\{\{x_{k_1},x_{k_2}\},x_{k_3}\},\;\{\{x_{k_1},x_{k_3}\},x_{k_2}\}
\Bigl\}$$
and
\[
\mathcal{B}_n=\Bigl\{\{\{\cdots\{\{x_{l_1},x_{l_2}\},x_{l_3}\},\cdots,\},x_{l_n}\}\Bigl\},
\]
where $j_1\leq j_2$, $k_1\leq k_2\leq k_3$ and $l_1\leq l_2\leq l_3\leq\ldots\leq l_n$.
Also, we set
\[
\mathcal{B}=\bigcup_{i}\mathcal{B}_i.
\]
\begin{theorem}\label{basisAs3}
The set $\mathcal{B}$ is a basis of algebra $\mathcal{A}s_3\<X\>$.
\end{theorem}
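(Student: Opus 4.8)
The plan is to follow the scheme of the proof of Theorem~\ref{basisAs2}: show first that $\mathcal{B}$ spans $\mathcal{A}s_3\<X\>$ and then that it is linearly independent. Throughout, every monomial is rewritten in the commutator/anti-commutator language via $ab=1/2([a,b]+\{a,b\})$, so it suffices to reduce an arbitrary \emph{bracket monomial} (a binary tree whose internal nodes are labelled $[\,\cdot\,,\cdot\,]$ or $\{\,\cdot\,,\cdot\,\}$ and whose leaves are generators) to a linear combination of elements of $\mathcal{B}$. At each internal node one may swap the two children at the cost of a sign, since $[a,b]=-[b,a]$ and $\{a,b\}=\{b,a\}$; this will be used freely to normalise tree shapes.

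\emph{Spanning.} In degrees $\le 3$ the reduction to $\mathcal{B}_1,\mathcal{B}_2,\mathcal{B}_3$ follows from the polarization identities (\ref{pol1})--(\ref{pol4}) together with Lemma~\ref{id1as3}: by (\ref{pol1}) and (\ref{pol2}) any degree-$3$ bracket monomial with an outer commutator (such as $[[a,b],c]$ or $[\{a,b\},c]$) is a combination of monomials with an outer anti-commutator, and Lemma~\ref{id1as3} orders their inner entries; this can also be confirmed with Albert~\cite{Albert}. For degree $n\ge 4$ the key claim is that \emph{every bracket monomial of degree at least $4$ containing a commutator bracket vanishes in $\mathcal{A}s_3\<X\>$.} I would prove this by induction on the degree. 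The base case $n=4$ is exactly Lemma~\ref{id2as3}: up to node reflections and relabelling of leaves, every degree-$4$ bracket monomial is either fully anti-commutator or one of the twelve monomials listed there. For the inductive step, take a degree-$n$ bracket monomial $w$ ($n\ge5$) containing a commutator. If that commutator lies in a subtree of degree $\ge 4$, the induction hypothesis kills that subtree. Otherwise pick a commutator node none of whose proper ancestors is a commutator; reading two levels upward, either its grandparent produces the subexpression $\{\{[a,b],c\},d\}=0$ of Lemma~\ref{id2as3} (evaluated on bracket monomials, which is legitimate since it is a multilinear identity), so $w=0$; or the node is so close to the root that $w$ itself unfolds as one of the degree-$4$ identities of Lemma~\ref{id2as3} applied to suitable bracket monomials and again vanishes; the only configuration not settled this way is when the unique commutator is the root, $w=[p,q]$, where, writing the degree-$\ge3$ all-anti-commutator factor in the left-normed ordered form supplied by the induction hypothesis and applying (\ref{pol1}), each resulting term is annihilated either by the induction hypothesis or by a degree-$4$ identity of the form $\{\{a,b\},[c,d]\}=0$. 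Once the claim holds, the surviving bracket monomials of degree $n\ge4$ are the left-normed all-anti-commutator words, and Lemma~\ref{id1as3}, applied at every level, totally symmetrises them, so each can be written with nondecreasing entries, i.e.\ lies in $\mathcal{B}_n$.

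\emph{Linear independence.} In degrees $\le 3$ one checks $\dim\mathcal{A}s_3(1)=1$, $\dim\mathcal{A}s_3(2)=2$, $\dim\mathcal{A}s_3(3)=4$ (no identity occurs below degree $3$, and the $S_3$-submodule of $\mathcal{A}s(3)$ generated by (\ref{as3}) is two-dimensional), matching $|\mathcal{B}_1|,|\mathcal{B}_2|,|\mathcal{B}_3|$; with the spanning part this already yields a basis in these degrees. For $n\ge4$, consider the algebra homomorphism $\pi\colon\mathcal{A}s_3\<X\>\to k[X]$ onto the polynomial algebra, determined by $x_i\mapsto x_i$; it is well defined because a commutative associative algebra satisfies (\ref{as3}). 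Then
\[
\pi\bigl(\{\{\cdots\{x_{l_1},x_{l_2}\},x_{l_3}\},\cdots,x_{l_n}\}\bigr)=2^{\,n-1}\,x_{l_1}x_{l_2}\cdots x_{l_n},
\]
and distinct nondecreasing tuples $(l_1,\dots,l_n)$ give distinct, hence linearly independent, monomials of $k[X]$; thus each $\mathcal{B}_n$ is linearly independent. Since $\mathcal{A}s_3\<X\>$ is graded by degree, $\mathcal{B}=\bigcup_i\mathcal{B}_i$ is a basis. Alternatively, linear independence can be obtained exactly as in Theorem~\ref{basisAs2}, by endowing the vector space with basis $\mathcal{B}$ with a multiplication dictated by the reduction rules above and verifying that it satisfies associativity and (\ref{as3}).

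The step I expect to be the main obstacle is the inductive spanning claim in degree $\ge4$: reducing an arbitrary bracket monomial to one on which a degree-$4$ identity of Lemma~\ref{id2as3} applies is delicate precisely when the sole commutator occupies the root, since there one must route through (\ref{pol1}) and use the already-established total symmetry of lower-degree anti-commutator words before any vanishing identity becomes available. If instead one adopts the model-algebra route for independence, the bulk of the labour migrates there, to checking the defining identities for the ad hoc multiplication, especially across the degree-$3$ boundary where $\mathcal{B}_3$ still carries the polarization relations.
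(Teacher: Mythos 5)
Your argument is correct, and while the spanning half follows essentially the paper's route, your linear-independence argument is genuinely different and noticeably lighter. For spanning, the paper simply asserts that Lemmas~\ref{id1as3} and~\ref{id2as3} reduce everything in degree $\ge 4$ to $\mathcal{B}_n$; your induction on degree (killing any bracket monomial of degree $\ge4$ that contains a commutator, with the root-commutator case routed through~(\ref{pol1}) and the degree-$4$ identities applied multilinearly to bracket monomials) supplies exactly the missing detail, and the delicate case you flag does close as you describe. For independence, the paper builds a model algebra $B\<X\>$ on the set $\mathcal{B}$, writes down the multiplication table, and verifies the polarization identities (\ref{pol1})--(\ref{pol4}) case by case; you instead use the evaluation homomorphism $\pi\colon\mathcal{A}s_3\<X\>\to k[X]$, under which the left-normed anti-commutator words go to $2^{n-1}x_{l_1}\cdots x_{l_n}$, so distinct elements of $\mathcal{B}_n$ ($n\ge4$) have linearly independent images. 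This is a cleaner and less error-prone certificate of independence in high degrees (no identity-checking for an ad hoc product), at the cost of needing the separate low-degree dimension count $\dim\mathcal{A}s_3(3)=4$ --- which is correct (the $S_3$-submodule generated by~(\ref{as3}) is indeed two-dimensional) and is anyway also delegated to computer verification in the paper. The paper's model-algebra construction, by contrast, treats all degrees uniformly and yields the isomorphism $B\<X\>\cong\mathcal{A}s_3\<X\>$ directly; your homomorphism argument only separates elements of $\mathcal{B}_n$ from each other, which suffices here precisely because your spanning step has already shown $\mathcal{B}_n$ spans the degree-$n$ component.
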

\begin{proof}
For degrees up to $3$, the result can be verified by software programs, such as Albert \cite{Albert}. Starting from degree $4$, the identities from Lemma \ref{id1as3} and Lemma \ref{id2as3} provide that the set $\mathcal{B}$ spans the algebra $\mathcal{A}s_3\<X\>$.

Let us consider an algebra $B\<X\>$ with a basis $\mathcal{B}$. Up to degree $3$, we define multiplication in $B\<X\>$ that is consistent with the identities (\ref{pol1}), (\ref{pol2}), (\ref{pol3}) and (\ref{pol4}). From degree $4$, it is 
\[
\begin{cases}
[\{\{\cdots\{\{x_{l_1},x_{l_2}\},x_{l_3}\},\cdots\},x_{l_n}\},\{\{\cdots\{\{x_{s_1},x_{s_2}\},x_{s_3}\},\cdots\},x_{s_m}\}]=0 \\
\{\{\{\cdots\{x_{l_1},x_{l_2}\},\cdots\},x_{l_n}\},\{\{\cdots\{x_{s_1},x_{s_2}\},\cdots\},x_{s_m}\}\}=\{\{\cdots\{x_{r_1},x_{r_2}\},\cdots\},x_{r_{n+m}}\}\\
\end{cases}
\]
where $\{r_1,r_2,\ldots,r_{n+m}\}=\{l_1,l_2,\ldots,l_{n},s_1,s_2,\ldots,s_{m}\}$ and $r_1\leq r_2\leq\ldots\leq r_{n+m}$. Starting from degree $4$, the remaining cases are also equal to $0$. 

It is remained to prove that an algebra $B\<X\>$ satisfies the identities (\ref{pol1}), (\ref{pol2}), (\ref{pol3}) and (\ref{pol4}). For the monomials up to degree $3$, the result follows from the construction.

Now, we check the identities for monomials $A_1,A_2,A_3\in \mathcal{B}$ with degrees greater than $3$:
\begin{itemize}
    \item The identity (\ref{pol1}) is always trivial in $B\<X\>$.
    \item From (\ref{pol2}), we obtain
    $$[A_2,[A_1,A_3]]-\{\{A_1,A_2\},A_3\}+\{A_1,\{A_2,A_3\}\}=-\{\{A_1,A_2\},A_3\}+\{\{A_1,A_2\},A_3\}=0.$$
    \item The identity (\ref{pol3}) gives
    \begin{multline*}
    \{A_1,\{A_2,A_3\}\}-3/2\{[A_1,A_3],A_2\}-3/2\{[A_1,A_2],A_3\}-1/2\{\{A_1,A_3\},A_2\}-1/2\{\{A_1,A_2\},A_3\}=\\
    \{\{A_1,A_2\},A_3\}-1/2\{\{A_1,A_2\},A_3\}-1/2\{\{A_1,A_2\},A_3\}=0.
    \end{multline*}
    \item The identity (\ref{pol4}) gives
    \begin{multline*}
    \{A_1,[A_2,A_3]\}+1/2\{[A_1,A_3],A_2\}-1/2\{[A_1,A_2],A_3\}-1/2\{\{A_1,A_3\},A_2\}+1/2\{\{A_1,A_2\},A_3\}=\\
    -1/2\{\{A_1,A_3\},A_2\}+1/2\{\{A_1,A_2\},A_3\}=-1/2\{\{A_1,A_2\},A_3\}+1/2\{\{A_1,A_2\},A_3\}=0.
    \end{multline*}
\end{itemize}
It remains to note that $B\<X\>\cong \mathcal{A}s_3\<X\>$.
\end{proof}

From Theorem \ref{basisAs3}, we have
\[
\dim(\mathcal{A}s_2(1))=1,\dim(\mathcal{A}s_2(2))=2,\dim(\mathcal{A}s_2(3))=4
\]
and starting from degree $4$,
\[
\dim(\mathcal{A}s_2(n))=1.
\]
This result shows that from degree $4$, an algebra $\mathcal{A}s_3$ behaves like a free associative commutative algebra.

From Theorem \ref{basisAs3}, Lemma \ref{id1as3} and Lemma \ref{id2as3}, we immediately obtain
\begin{corollary}
A free nilpotent Lie algebra of index $4$ can be embedded into a free algebra from the variety $\mathcal{A}s_3$ under the commutator.
\end{corollary}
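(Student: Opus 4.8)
The plan is to show that the Lie subalgebra $L$ of $\mathcal{A}s_3\<X\>$ generated by $X$ under the commutator $[a,b]=ab-ba$ is isomorphic to the free nilpotent Lie algebra $N$ of index $4$ on $X$, i.e.\ to the quotient of the free Lie algebra on $X$ by the ideal of all brackets of length $\geq 4$. First, Lemma~\ref{id2as3} gives the identity $[[[a,b],c],d]=0$ in $\mathcal{A}s_3\<X\>$; since the fourth term of the lower central series of any Lie algebra is spanned by left-normed brackets $[[[a,b],c],d]$, the commutator algebra $\mathcal{A}s_3\<X\>^{(-)}$ is nilpotent of index $4$. Hence, by the universal property, there is a surjective Lie homomorphism $\varphi\colon N\twoheadrightarrow L$ fixing $X$, and the corollary is equivalent to $\varphi$ being injective. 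As $\varphi$ is homogeneous for the degree grading and $N$ is concentrated in degrees $1,2,3$, it suffices to prove that $\varphi$ is injective in each of these degrees; in particular one may assume $X$ finite.

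In degree $1$, $\varphi$ is the identity on the span of $X$. In degree $2$, $\varphi([x_i,x_j])=x_ix_j-x_jx_i$, and since the defining ideal of $\mathcal{A}s_3\<X\>$ inside the free associative algebra lies in degrees $\geq 3$, the degree-$2$ component of $\mathcal{A}s_3\<X\>$ is free on the monomials $x_ix_j$; thus the elements $x_ix_j-x_jx_i$ with $i<j$ are linearly independent.

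The degree-$3$ step is the heart of the matter. Over a field of characteristic $0$ it is enough, by a standard argument, to treat the multilinear case, i.e.\ three distinct generators; there the degree-$3$ component of $N$ is two-dimensional, spanned by $[[x_1,x_2],x_3]$ and $[[x_1,x_3],x_2]$. Expanding these via the polarization identities (\ref{pol1})--(\ref{pol4}) and rewriting in the basis $\mathcal{B}_3$ of Theorem~\ref{basisAs3} (with $k_1=1,\ k_2=2,\ k_3=3$), one obtains
\[
[[x_1,x_2],x_3]=3/2\{[x_1,x_2],x_3\}+3/2\{[x_1,x_3],x_2\}+1/2\{\{x_1,x_2\},x_3\}-1/2\{\{x_1,x_3\},x_2\},
\]
and, applying the transposition $x_2\leftrightarrow x_3$,
\[
[[x_1,x_3],x_2]=3/2\{[x_1,x_2],x_3\}+3/2\{[x_1,x_3],x_2\}-1/2\{\{x_1,x_2\},x_3\}+1/2\{\{x_1,x_3\},x_2\}.
\]
Their sum is $3\bigl(\{[x_1,x_2],x_3\}+\{[x_1,x_3],x_2\}\bigr)$ and their difference is $\{\{x_1,x_2\},x_3\}-\{\{x_1,x_3\},x_2\}$; since the four bracket monomials occurring here are distinct elements of the basis $\mathcal{B}_3$, these two images are linearly independent in $\mathcal{A}s_3\<X\>$. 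Hence $\varphi$ is injective in degree $3$, and the corollary follows. (Representation-theoretically: $\Lie(3)$ and the degree-$3$ component of the operadic ideal cutting out $\mathcal{A}s_3$ are two \emph{distinct} copies of the $2$-dimensional irreducible $S_3$-module inside $\As(3)\cong\mathbb{Q}[S_3]$, so they meet only in $0$.)

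The main obstacle is precisely this degree-$3$ non-collapse: the nilpotency of $\mathcal{A}s_3\<X\>^{(-)}$ handles degrees $\geq 4$ for free, freeness of the low-degree part of $\mathcal{A}s_3\<X\>$ handles degrees $\leq 2$, but in degree $3$ the relation (\ref{as3}) could in principle annihilate Lie elements --- as happens for the commutative quotient of $\As$ --- so one genuinely has to pin the image of $\varphi|_{N_3}$ down inside the basis of Theorem~\ref{basisAs3} (equivalently, perform the $S_3$-isotypic bookkeeping above).
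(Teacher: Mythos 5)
Your proof is correct and uses exactly the ingredients the paper itself points to: Lemma~\ref{id2as3} gives $[[[a,b],c],d]=0$, hence nilpotency of the commutator algebra in degrees $\geq 4$, and Theorem~\ref{basisAs3} supplies the basis needed in degrees $\leq 3$; the paper states the corollary as an ``immediate'' consequence of these and offers no further argument. The only genuinely non-trivial point --- that $\Lie(3)$ injects into $\mathcal{A}s_3(3)$, i.e.\ that the degree-$3$ relation (\ref{as3}) does not kill any Lie element --- is precisely what the paper's ``immediately'' suppresses, and your verification of it (expanding $[[x_1,x_2],x_3]$ and $[[x_1,x_3],x_2]$ in the basis $\mathcal{B}_3$ via (\ref{pol2}) and (\ref{pol3}) and checking linear independence) is accurate.
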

\begin{corollary}
A free Jordan algebra with identities from Lemma \ref{id1as3} can be embedded into a free algebra from the variety $\mathcal{A}s_3$ under the anti-commutator.
\end{corollary}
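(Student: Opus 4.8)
The plan is to realize the embedding as the canonical Jordan homomorphism $\phi\colon\mathcal{V}\<X\>\to\mathcal{A}s_3^{(+)}\<X\>$ determined by $x_i\mapsto x_i$, where $\mathcal{V}$ is the variety of Jordan algebras satisfying the identities of Lemma~\ref{id1as3} and $\mathcal{A}s_3^{(+)}\<X\>$ denotes $\mathcal{A}s_3\<X\>$ equipped with the anti-commutator as its product. Since $\mathcal{A}s_3\<X\>$ satisfies the identities of Lemma~\ref{id1as3} (this is exactly what makes $\mathcal{B}$ a spanning set in the proof of Theorem~\ref{basisAs3}), the algebra $\mathcal{A}s_3^{(+)}\<X\>$ lies in $\mathcal{V}$, so $\phi$ exists by freeness and its image is the Jordan subalgebra of $\mathcal{A}s_3^{(+)}\<X\>$ generated by $X$. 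The corollary is exactly the assertion that $\phi$ is injective. Because the ground field has characteristic $0$, it suffices to prove injectivity on each multilinear component, i.e. to compare, for every $n$, the $n$-th multilinear component of $\mathcal{V}\<X\>$ with its image under $\phi$.

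First I would bound $\mathcal{V}\<X\>$ from above. In degree $n\le 3$ the identities of Lemma~\ref{id1as3} impose nothing (they are homogeneous of degree $4$), so the multilinear component of $\mathcal{V}\<X\>$ is a quotient of that of the free commutative nonassociative algebra; in degree $3$ the latter is spanned by the three monomials $\{\{x_1,x_2\},x_3\}$, $\{\{x_1,x_3\},x_2\}$, $\{\{x_2,x_3\},x_1\}$, so it is at most $3$-dimensional, and it is at most $1$-dimensional in degrees $1$ and $2$. For $n\ge 4$ I would prove that the multilinear component is at most one-dimensional, spanned by the left-normed monomial $w=\{\{\cdots\{x_1,x_2\},x_3\},\ldots,x_n\}$. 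This uses only commutativity and Lemma~\ref{id1as3}: applying $\{\{a,b\},\{c,d\}\}=\{\{\{a,b\},c\},d\}$ repeatedly at the root, together with commutativity when one factor is a single generator, strictly decreases the degree of the right child of the root and hence rewrites every Jordan monomial to left-normed form; then, for each $k$ with $3\le k\le n-1$, the identities $\{\{\{a,b\},c\},d\}=\{\{\{a,c\},b\},d\}=\{\{\{a,b\},d\},c\}$ applied to the subword $\{\{\{W,x_{l_{k-1}}\},x_{l_k}\},x_{l_{k+1}}\}$, with $W$ the left-normed prefix, effect the transpositions $(k-1,k)$ and $(k,k+1)$ of the arguments, while $(1,2)$ is free from commutativity of the innermost product. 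These transpositions generate $S_n$, so $w$ is symmetric in all its arguments, whence the multilinear component has dimension at most $1$ for $n\ge 4$.

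It then remains to check that $\phi$ collapses none of these spanning sets. For $n\ge 4$, $\phi(w)$ equals, up to a nonzero scalar, the basis element of $\mathcal{B}_n$, which is nonzero by Theorem~\ref{basisAs3}; combined with the bound just obtained, this forces the multilinear degree-$n$ component of $\mathcal{V}\<X\>$ to be exactly one-dimensional and $\phi$ to be an isomorphism there. For $n=2$, $\phi(\{x_i,x_j\})$ is, up to a scalar, the basis element $\{x_i,x_j\}\in\mathcal{B}_2$. For $n=3$ I would compute the images of the three cubic Jordan monomials in the basis $\mathcal{B}_3=\{\{[x_1,x_2],x_3\},\,\{[x_1,x_3],x_2\},\,\{\{x_1,x_2\},x_3\},\,\{\{x_1,x_3\},x_2\}\}$: the monomials $\{\{x_1,x_2\},x_3\}$ and $\{\{x_1,x_3\},x_2\}$ are themselves basis vectors, while by the polarization~(\ref{pol3}) one has $\phi(\{\{x_2,x_3\},x_1\})=\{x_1,\{x_2,x_3\}\}=(3/2)\{[x_1,x_3],x_2\}+(3/2)\{[x_1,x_2],x_3\}+(1/2)\{\{x_1,x_3\},x_2\}+(1/2)\{\{x_1,x_2\},x_3\}$, which has nonzero coordinates along the commutator-type basis vectors $\{[x_1,x_2],x_3\}$ and $\{[x_1,x_3],x_2\}$. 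Hence the three images are linearly independent in $\mathcal{B}_3$, so the cubic component of $\mathcal{V}\<X\>$ is exactly $3$-dimensional and $\phi$ is injective there as well. Being injective on every multilinear component, $\phi$ is injective, which proves the corollary.

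The step requiring the most care is the cubic one: the corollary hinges on the fact that the polarization~(\ref{pol3}) has nonzero coefficients ($3/2$ in the displayed normalization) on the commutator-type basis vectors, for this is precisely what lets $\phi$ separate the three cubic Jordan monomials inside the four-dimensional $\mathcal{B}_3$; were those coefficients zero the map would fail to be injective. The degree-$\ge 4$ collapse is less delicate — it is essentially forced by Lemma~\ref{id1as3} — but still needs the termination of the left-normalizing rewriting and the observation that the produced transpositions generate $S_n$. The only remaining chore is to verify the non-multilinear homogeneous components, which follows from the standard characteristic-$0$ $S_n$-equivariant reduction, or directly: it concerns only degrees $2$ and $3$, since for $n\ge 4$ every homogeneous component is again at most one-dimensional by the same symmetrization.
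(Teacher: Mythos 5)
Your proposal is correct, and it fills in exactly the argument the paper leaves implicit: the paper states this corollary as an immediate consequence of Theorem~\ref{basisAs3} together with Lemmas~\ref{id1as3} and~\ref{id2as3}, offering no written proof. Your route --- bounding the multilinear components of the free Jordan algebra with the identities of Lemma~\ref{id1as3} via left-normalization and the induced adjacent transpositions, and then matching those bounds against the linearly independent images in the basis $\mathcal{B}$ (with the degree-$3$ case settled by the polarization~(\ref{pol3})) --- is precisely the intended derivation, carried out in full detail.
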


\begin{remark}
In \cite{Lopatin}, there was constructed a basis of algebra $\mathcal{A}s_1\<X\>$, i.e., a free associative algebra with identity
\[
abc+acb+bac+bca+cab+cba=0.
\]
It is worth noting that the algebra $\mathcal{A}s_1\<X\>$ is nilpotent of index $6$.
\end{remark}

\subsection*{Acknowledgments}
This research was funded by the Science Committee of the Ministry of Science and Higher Education of the Republic of Kazakhstan (Grant No. AP23484665).

\end{document}